\documentclass[11pt]{amsart}
\usepackage{amsfonts,amsmath,amssymb,amsthm}

\newtheorem{theorem}{Theorem}[section]
\newtheorem{lemma}[theorem]{Lemma}

\newcommand{\av}{\ensuremath{\text{Av}_{n}}}
\begin{document}

\title{An erd\H{o}s-Szekeres-Type Result on Permutation Patterns}
\author{Nicholas Van Nimwegen}

\maketitle

\begin{abstract}
    We construct an infinite set of patterns $(p,q)$ where $p$ is an almost increasing permutation and $q$ is an almost decreasing permutation, such that the exponential growth rate of permutations avoiding $(p,q)$ is 2. This extends a classical result of Erd\H{o}s and Szekeres.
\end{abstract}

\begin{section}{Introduction}

Since first systemically studied by Simon and Schmidt in 1985, the field of permutation patterns has been an active area of mathematics. One of the most common tools used by mathematicians in this area has been the idea of \textit{Wilf-equivalence}, where two patterns $\sigma$ and $\tau$ are said to be Wilf-equivalent if $\av(\sigma_{k}) = \av(\tau)$ for all $n \geq 0$. All permutations have a number of natural Wilf-equivalences under the reverse, complement, and inverse actions, forming a set of symmetries isomorphic to the symmetries of the square $D_{8}$. From this, it is perhaps a natural question to ask how many permutations avoid some subset of symmetries of a given pattern $\sigma$.

There is at least one result that suggests this question may be an interesting one to ask. For a collection of patterns $Q$, let the \textit{exponential growth rate} of permutations avoiding $Q$ be defined as $L(Q) = \lim_{n \rightarrow \infty}\sqrt[n]{\av(Q)}$. In general, the exponential growth rate of permutations avoiding a pattern (or collection of patterns) $Q$ seems to be related to the length of the patterns being avoided. In fact, for a single pattern $\sigma$ of length $k$, it was proven in \cite{k} that $L(\sigma) \geq (k-1)^{2}/e^{3}$. However, the famous Erd\H{o}s-Szekeres theorem states that for all $k \geq 1$, $\av(12\dots k,k(k-1)\dots 1) = 0$ for $n \geq (k-1)^{2}$, and therefore $L(12\dots k,k(k-1)\dots1) =0$. This suggests that avoiding some symmetries of a single pattern $\sigma$ can change the problem of avoidance in such a way that only the structure of the pattern matters, not its length.

We prove a similar result, where avoiding a particular pattern and its reverse seems to be rather difficult, related to growth rates. In particular, we prove the following theorem:
\begin{theorem}
    Let $k \geq 3$, $\sigma_{k} = 12\dots (k-2)k(k-1)$. Then $L(\sigma_{k},\sigma_{k}^{rev}) = 2$.
\end{theorem}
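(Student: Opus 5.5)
The proof splits into a lower bound, which is immediate, and an upper bound, which carries all the difficulty.

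\textbf{Lower bound.} For every $k\ge 3$ the last three letters $(k-2)\,k\,(k-1)$ of $\sigma_k$ form an occurrence of $132$. Symmetrically, the first three letters $(k-1)\,k\,(k-2)$ of $\sigma_k^{rev}$ form an occurrence of $231$. Hence every permutation avoiding $\{132,231\}$ avoids $\{\sigma_k,\sigma_k^{rev}\}$. A permutation avoids $132$ and $231$ exactly when it has no ``peak'' pattern, that is, when it is decreasing and then increasing. There are exactly $2^{n-1}$ such permutations: each of $1,\dots,n-1$ is placed either left or right of the entries already placed. Therefore $\av(\sigma_k,\sigma_k^{rev})\ge 2^{n-1}$ and $L\ge 2$. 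For $k=3$ this is equality, so the case $k=3$ is finished.

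\textbf{Upper bound.} The aim is $\av(\sigma_k,\sigma_k^{rev})\le P_k(n)\,2^{n}$ for some polynomial $P_k$ depending only on $k$; this gives $L\le 2$. The tool is an Erd\H{o}s--Szekeres-type argument around peaks. Suppose $\pi$ contains a $132$ occurrence $a\,b\,c$ with $a<c<b$. Let $S$ be the set of entries to the left of $a$ with values below $a$. Then $S$ cannot contain an increasing subsequence of length $k-3$, since otherwise that subsequence followed by $a,b,c$ would be a copy of $\sigma_k$. Symmetrically, the entries to the right of $c$ with values below $c$ cannot contain a decreasing subsequence of length $k-3$, since otherwise $a,b,c$ followed by it would be a copy of $\sigma_k^{rev}$. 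So each ``peak'' forces the low region on its left to be a union of at most $k-4$ decreasing sequences, and the low region on its right to be a union of at most $k-4$ increasing sequences.

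I will use this by choosing a canonical peak, for instance a $132$/$231$ occurrence whose top entry $b$ is as large as possible, or the first one from the left. I then decompose $\pi$ relative to it into $O(k)$ regions. Each region should be either monotone, controlled by the bounded-width conclusions above, or itself a permutation with no peaks at the relevant level. Inducting on the number of such nested peaks, I expect the following picture: $\pi$ is a peak-free (V-shaped) skeleton, which contributes the factor $2^n$, with the remaining entries organised into $O_k(1)$ monotone chains. The positions and value-ranges of these chains are described by $O_k(1)$ cut points, costing a polynomial factor $n^{O(k)}$. The interleaving of boundedly many monotone chains with the skeleton is encoded by a word over an alphabet of size depending on $k$. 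The words must be constrained so that the total count is $2^n\cdot n^{O(k)}$ rather than $c^n$ with $c>2$.

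\textbf{Main obstacle.} The hardest step is exactly this last one: showing that the interleaving choices do not multiply the exponential base. A naive encoding with $k-4$ chains gives growth rate roughly $k-2$, which is too large. I would first try to show that at most one ``free binary choice'' per entry survives: once the skeleton is fixed, each non-skeleton entry's position should be forced up to $O_k(1)$ exceptional entries, because two independent chains crossing the skeleton would create a $\sigma_k$ or $\sigma_k^{rev}$ with the canonical peak. If that fails, the fallback is a transfer-matrix / generating-function argument on the encoding words, showing the dominant eigenvalue equals $2$.
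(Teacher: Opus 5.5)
Your lower bound is correct and matches the paper's. The upper bound, however, is not a proof. The step you flag as the ``main obstacle'' is showing that placing the non-skeleton entries costs only a polynomial factor rather than $c^n$ with $c>2$. That step is the entire content of the theorem. You leave it as a hoped-for picture (``I expect'', ``I would first try'', ``the fallback is a transfer-matrix argument''): no canonical peak is fixed, no decomposition is specified, and no induction is carried out.

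The local lemma you do state also has an error. For a $132$ occurrence $a\,b\,c$, a decreasing sequence of length $k-3$ to the right of $c$ and below $c$ does not yield $\sigma_k^{rev}$. The pattern $\sigma_k^{rev}=(k-1)k(k-2)\cdots 1$ begins with a $231$, whereas $a<c$. That statement holds for a $231$ occurrence, not for the same $132$.

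The missing idea is a global partition by rank and rev-rank, instead of a local analysis around one peak:
\begin{itemize}
\item[(i)] Let $B$ be the entries of rank at least $k-1$, $C$ the entries of rev-rank at least $k-1$, and $A$ the rest.
\item[(ii)] In an avoider, $B\cap C=\emptyset$. An entry in both has a smaller predecessor $x$ of rank at least $k-2$ and a smaller successor $y$ of rev-rank at least $k-2$. Then $x<y$ gives $\sigma_k$, and $x>y$ gives $\sigma_k^{rev}$.
\item[(iii)] No entry of $B$ lies to the left of an entry of $C$. If it did, the larger of the two would gain rank or rev-rank and land in $B\cap C$.
\item[(iv)] $A$ avoids $12\cdots(k-1)$ and $(k-1)\cdots 21$, so $|A|\le (k-2)^2$ by Erd\H{o}s--Szekeres.
\item[(v)] $B$ avoids $132$ and $(k-1)\cdots 21$, and $C$ avoids $231$ and $12\cdots(k-1)$. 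By Reifegerste's formula, each of $B$ and $C$ has only polynomially many orderings.
\end{itemize}

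Once $A$ is placed (polynomially many ways), the positions of $C$ and $B$ are forced, with $C$ first and then $B$. The only exponential freedom left is which values go to $B$, which gives at most $2^n$ choices. This is exactly the ``one free binary choice per entry'' you were hoping for. Your worry about interleaving boundedly many monotone chains is legitimate. What removes the $(k-2)^n$-type interleaving count is the $132$-avoidance of $B$ (and $231$-avoidance of $C$), which forces polynomially many interleavings.
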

Note that the growth rate has no reliance at all on the length of the patterns being avoided (except for the condition that they be of length at least 3). We can also obtain a generalization, where the lengths of the two patterns can be different sizes:
\begin{theorem}
    Let $k,s \geq 3, \sigma = 12\dots (k-2)k(k-1), \tau = (s-1)s(s-2)\dots 21$. Then $L(\sigma,\tau) = 2$.
\end{theorem}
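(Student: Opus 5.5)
We prove the two inequalities $L(\sigma,\tau)\ge 2$ and $L(\sigma,\tau)\le 2$ separately. Throughout, $\sigma=12\dots(k-2)k(k-1)$ is increasing except that its last two entries are swapped, and $\tau=(s-1)s(s-2)\dots21$ is decreasing except that its first two entries are swapped.

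\textbf{Lower bound.} We exhibit $2^{n-c}$ permutations avoiding both patterns, for some constant $c$. The natural candidates are the \emph{layered} permutations: concatenations of decreasing runs $D_1D_2\cdots D_m$ with every entry of $D_i$ smaller than every entry of $D_{i+1}$. There are $2^{n-1}$ of them.
\begin{itemize}
\item A layered permutation contains $\tau$ only if some layer has length at least $s-2$, because any two entries of $\tau$ other than its first two positions form a descent and must therefore lie in a single layer.
\item A layered permutation contains $\sigma$ only if it has at least $k-1$ layers, because an increasing subsequence meets each layer at most once.
\end{itemize}
Restricting to few layers is too strong, so we use a different construction: a permutation whose layers all have length at most $2$ has distinct increasing subsequences built from distinct layers. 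A copy of $\sigma$ then needs only $k-2$ increasing steps followed by a layer of size $2$, so at most $O(1)$ layers contain $\sigma$. The cleanest fix is to take the direct sum of all layers of size $\le 2$ that avoid $\sigma$ together with their skew counterparts; this is messy.

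A simpler fix is to take permutations of the form $\alpha\ominus\beta$, or more generally permutations built from a small number of blocks in which each block is monotone-ish. Concretely, take "unimodal-type" permutations: those that increase to $n$ and then decrease. There are $2^{n-1}$ of these. Such a permutation contains no copy of $\sigma$ ending in a descent $k,k-1$ with both entries after a long increasing run on the left, except when the run lies in the increasing part. We would check this carefully, and if it fails, restrict to a sub-family of size $2^{n-O(1)}$, for instance by fixing the first few entries. In short, we search among the standard $2^{n}$-sized classes (layered, unimodal, and their symmetries) for one in which both patterns are blocked by the bounded lengths $k,s$.

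\textbf{Upper bound.} This is the main obstacle. Let $\pi$ avoid $\sigma$ and $\tau$. By Erd\H{o}s--Szekeres, $\pi$ cannot have both a long increasing and a long decreasing subsequence in a structured way. More usefully:
\begin{itemize}
\item Avoiding $\sigma$ means that after any increasing subsequence of length $k-2$, the entries larger than its last element that appear later must form an increasing sequence.
\item Avoiding $\tau$ gives the symmetric constraint.
\end{itemize}
We would mark the left-to-right minima of $\pi$ and peel off a bounded "prefix" structure. Then we would show that $\pi$ decomposes into $O(k+s)$ monotone pieces whose interleaving is encoded by a binary word, up to a polynomial factor. This would give $\mathrm{Av}_n(\sigma,\tau)\le n^{O(1)}2^{n}$.

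To make this precise, we would use the standard fact that the class $\mathrm{Av}(\sigma)$ has a grid-like structure: each entry is either among the first $k-3$ levels of the increasing-run greedy decomposition, or lies in an increasing tail. We would then bound the number of ways of merging an increasing and a decreasing sequence by $2^{n}$, which is the expected source of the growth rate $2$. Finally, we would take $n$-th roots, showing that the polynomial and constant factors vanish in the limit.
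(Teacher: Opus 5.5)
Neither bound is actually established, and several of the concrete claims made along the way are false.

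\textbf{Lower bound.} You never settle on a class that works. Layered permutations avoid $\tau$ outright, since they avoid $231$ and $(s-1)\,s\,1$ is a $231$ inside $\tau$. However, a layered permutation contains $\sigma$ as soon as some layer after the $(k-2)$-nd has size at least $2$, so only polynomially many layered permutations avoid $\sigma$. Unimodal permutations are worse. The permutation $12\cdots(n-2)\,n\,(n-1)$ contains $\sigma$, and $\tau$ is itself unimodal. More generally, any entry $a$ before the peak $n$, followed by $n$ and then by $s-2$ entries smaller than $a$ after the peak, is a copy of $\tau$. Hence only polynomially many unimodal permutations avoid $\tau$, and no sub-family of size $2^{n-O(1)}$ (for instance one obtained by fixing initial entries) can work. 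The class you want is the complement of the unimodal class, $\mathrm{Av}(132,231)$ (decrease to $1$, then increase), which has $2^{n-1}$ elements. It lies inside $\mathrm{Av}(\sigma,\tau)$ because $(k-2)\,k\,(k-1)$ is a $132$ in $\sigma$ and $(s-1)\,s\,1$ is a $231$ in $\tau$. This has nothing to do with $k,s$ being bounded; it comes from the patterns containing $132$ and $231$.

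\textbf{Upper bound.} Here you give only a plan, and its two named ingredients are wrong as stated.
\begin{enumerate}
\item The ``standard fact'' that, beyond the first few rank levels, a $\sigma$-avoider is an increasing tail already fails for $k=3$. The permutation $3412$ avoids $132$, yet its two entries of rank $2$ occur in the order $4,2$. What is true is only that the entries of rank at least $k-1$ avoid $132$.
\item Merges of an increasing and a decreasing sequence are not counted by $2^n$: skew-merged permutations have growth rate $4$. A $2^n$ bound needs the structural fact that, apart from a bounded exceptional set, the ``decreasing-type'' entries all lie to the left of the ``increasing-type'' ones. Then only the split of values contributes exponentially. This is exactly the step you never supply.
\end{enumerate}

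For this theorem none of that is needed. Let $m=\max(k,s)$. Then $\sigma$ is contained in $\sigma_m=12\cdots(m-2)\,m\,(m-1)$, and $\tau=\sigma_s^{rev}$ is contained in $\sigma_m^{rev}$. Hence $\mathrm{Av}_n(\sigma,\tau)\subseteq\mathrm{Av}_n(\sigma_m,\sigma_m^{rev})$, and Theorem 1.1 gives $L(\sigma,\tau)\le 2$.

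A self-contained version would reproduce the proof of Theorem 1.1:
\begin{enumerate}
\item Split the entries into $A$ (rank and rev-rank at most $m-2$, so $|A|\le(m-2)^2$ by Erd\H{o}s--Szekeres), $B$ (rank at least $m-1$) and $C$ (rev-rank at least $m-1$).
\item Show that $B\cap C=\emptyset$ and that every entry of $C$ precedes every entry of $B$.
\item Bound the internal orders of $B$ and $C$ polynomially via Lemma 2.1.
\end{enumerate}
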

We conclude with several comments on other sets of patterns, highlighting some other cases where avoiding a pattern and its reverse seems to give some amount of independence from pattern length.

\end{section}

\begin{section}{Proof of Theorem 1.1}

We will begin with stating several lemmas that will be essential for our proof. The first is a result of Reifegerste \cite[Cor 4.3]{reif}:

\begin{lemma}
    Let $k \geq 3$. Then $p_{k}(n)=\av(132,k\dots 21) = \frac{1}{n}\sum_{i=1}^{k-1}{n \choose i}{n \choose i-1}$. In particular, this is a polynomial of degree at most $2k-2$ in $n$.
\end{lemma}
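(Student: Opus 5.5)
The plan is to identify the statistic ``length of the longest decreasing subsequence'' on $132$-avoiding permutations with a statistic whose distribution is known to be Narayana. Then $\av(132,k\dots21)$ is the number of $132$-avoiders of length $n$ with longest decreasing subsequence of length at most $k-1$. The Narayana number $N(n,i)=\frac{1}{n}\binom{n}{i}\binom{n}{i-1}$ is summed over $i=1,\dots,k-1$. As a sanity check, for $k=3$ this gives $1+\binom{n}{2}$, which is the correct count of $\av(132,321)$.

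\textbf{Step 1: the longest decreasing subsequence equals the number of left-to-right minima.} Let $\pi$ avoid $132$. The left-to-right minima of $\pi$ form a decreasing subsequence, so $\mathrm{lds}(\pi)$ is at least their number. Conversely, cut $\pi$ into blocks, each beginning at a left-to-right minimum and running up to the next one. Suppose a decreasing subsequence contains two entries $a>b$, with $a$ before $b$, lying in the same block with minimum $m$. If $a=m$, then $b<m$ would be a new left-to-right minimum, which is impossible. So both entries come after $m$, and since $b$ is not a left-to-right minimum, $m<b<a$. Then $m,a,b$ is an occurrence of $132$, a contradiction. Hence a decreasing subsequence uses at most one entry per block, and $\mathrm{lds}(\pi)$ equals the number of left-to-right minima.

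\textbf{Step 2: Narayana distribution of left-to-right minima.} Next I would show that the number of $132$-avoiders of length $n$ with exactly $i$ left-to-right minima is $N(n,i)$. I would use the standard bijection between $132$-avoiding permutations and Dyck paths of semilength $n$, chosen so that left-to-right minima correspond to peaks, together with the classical fact that Dyck paths with $i$ peaks are counted by $N(n,i)$.

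Concretely, read $\pi$ left to right. At each left-to-right minimum $m$, take enough up-steps to reach total height $n+1-m$ counted in up-steps; then take one down-step per entry. The $132$-avoidance is exactly what makes this map invertible: each non-minimum entry is forced to be the smallest unused value exceeding the current minimum. The peaks of the path then occur precisely at the left-to-right minima.

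Alternatively, I could avoid the bijection and use the decomposition $\pi=\alpha\, n\, \beta$ with $\alpha>\beta$ entrywise. The number of left-to-right minima of $\pi$ equals that of $\alpha$ plus that of $\beta$, except when $\alpha$ is empty, where $n$ itself is one extra minimum. This gives the Narayana functional equation $F=1+xF\,(F-1+t)$ in the variable $t$ marking minima.

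\textbf{Step 3: summing.} Summing $N(n,i)$ over $i\le k-1$ gives the formula. The polynomial claim is immediate: each summand is $\frac1n$ times a polynomial of degree $2i-1$ that vanishes at $n=0$, so the sum is a polynomial of degree at most $2k-3\le 2k-2$.

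I expect the main obstacle to be Step 2, namely verifying cleanly that the bijection, or the functional equation, really tracks left-to-right minima. Step 1 is a short, self-contained argument.
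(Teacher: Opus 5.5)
Your proof is correct. It necessarily takes a different route, because the paper does not prove this lemma at all: it just cites Reifegerste's Corollary~4.3.

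Your Step~1 is complete as written. The running minimum stays $m$ throughout a block, so two decreasing entries in one block would give the occurrence $m,a,b$ of $132$. This reduces the lemma to the classical fact that left-to-right minima on $\mathrm{Av}_n(132)$ are Narayana distributed.

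Both versions of Step~2 go through with routine details you only sketched:
\begin{itemize}
\item \emph{Path map.} The path stays weakly above the axis because the minimum of $j$ distinct values in $\{1,\dots,n\}$ is at most $n+1-j$. Your forcing argument gives injectivity. Surjectivity follows because both sets have the Catalan number $C_n$ of elements; alternatively, check directly that the greedy reconstruction from any Dyck path yields a $132$-avoider.
\item \emph{Functional equation.} Putting $G=F-1$ gives $G=x(1+G)(t+G)$. Lagrange inversion then yields $[x^nt^i]G=\frac1n\binom{n}{i}\binom{n}{i-1}$ directly.
\end{itemize}
What your route buys over the citation is a self-contained, elementary explanation of why Narayana numbers appear.

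One small slip in Step~3: after dividing by $n$, the summand $\frac1n\binom{n}{i}\binom{n}{i-1}$ has degree $2i-2$, not $2i-1$. So $p_k(n)$ has degree exactly $2k-4$ for $k\ge 3$, as your own check $1+\binom{n}{2}$ for $k=3$ shows. Your bound $2k-3$ and the paper's $2k-2$ are both true but not sharp.
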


As $p_{k}(n)$ is a polynomial in $n$ of degree $2k-2$, there is some constant $\beta_{k}$ such that $\av(132,k\dots 21) \leq \beta _{k}n^{2k-2}$ for all sufficiently large $n$.

The second is a well-known result, which we prove here for simplicity:

\begin{lemma}
    $\av(132,231) = 2^{n-1}$, and therefore $L(132,231) = 2$.
\end{lemma}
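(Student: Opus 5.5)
The plan is to locate the position of the entry $n$ in a permutation $\pi$ of length $n$ that avoids both $132$ and $231$, and derive a recursion from it. Suppose $n$ sits at position $j$ with $1<j<n$. Then there is an entry $a$ to its left and an entry $b$ to its right. If $a<b$, the triple $a\,n\,b$ is an occurrence of $132$. If $a>b$, the same triple is an occurrence of $231$. Either way we get a contradiction, so $n$ must be the first or the last entry of $\pi$.

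Next I will check that deleting $n$ gives a bijection. Write $\pi=n\pi'$ or $\pi=\pi' n$. In both cases $\pi'$ is a permutation of length $n-1$ that still avoids $132$ and $231$, since any occurrence in $\pi'$ would also be an occurrence in $\pi$.

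Conversely, take any $\pi'$ of length $n-1$ avoiding both patterns, and prepend or append $n$. This creates no new occurrence of either pattern, because any new occurrence would have to use $n$. Both $132$ and $231$ place their largest letter in the middle position, but in the new permutation $n$ is at an end.

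For $n\geq 2$ the two extensions $n\pi'$ and $\pi' n$ are distinct, so $\av(132,231)=2\,\av_{n-1}(132,231)$. Together with $\av_1(132,231)=1$, induction gives $\av(132,231)=2^{n-1}$. Then $\sqrt[n]{2^{n-1}}=2^{(n-1)/n}\to 2$, which yields $L(132,231)=2$.

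There is no real obstacle here. The only point needing care is the observation that in both forbidden patterns the maximum is the middle letter. This single fact forces $n$ to an end and also shows that the extensions are safe.
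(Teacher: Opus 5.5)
Your proof is correct, but it follows a different route from the paper's.

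\textbf{Your route.} You peel off the maximum. The largest letter of both $132$ and $231$ sits in the middle position, so $n$ must be at an end. Deleting it, or re-attaching it at either end, gives a bijection between two copies of $\text{Av}_{n-1}(132,231)$ and $\av(132,231)$. This yields the recursion $a_n=2a_{n-1}$.

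\textbf{The paper's route.} The paper gives a direct global description of the class, built around the minimum. If $\pi_i<\pi_j$ with $i<j$, then every later entry must exceed $\pi_j$. Hence $\pi$ decreases down to the entry $1$ and increases afterward. Such a permutation is determined by which of the values $2,\dots,n$ lie to the left of $1$, giving $2^{n-1}$ at once.

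\textbf{Comparison.} Unrolling your recursion reproduces exactly this picture: the values $n,n-1,\dots,2$ are successively placed at the left or right end around $1$. So the two arguments are dual, one working down from the top value and the other up from the bottom.

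Your version explicitly checks the converse direction, namely that every extension still avoids both patterns. The paper leaves this implicit when it asserts that all $2^{n-1}$ choices of $A,B$ give valid permutations. It holds because in a decreasing-then-increasing sequence the largest of any three entries is never the middle one. The paper's version gives an explicit bijection with subsets of $\{2,\dots,n\}$ and a closed-form description of the whole class, with no induction needed.
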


\begin{proof}
    Consider $\pi \in \av(132,231)$. Note that if $\pi_{i} < \pi_{j}$ for $i < j$, then $\pi_{k} > \pi_{j}$ for all $j < k$, else we get a 132 or 231 pattern. Therefore, all entries before the first ascent must be decreasing, and all entries after the first ascent must be increasing. Since any entry following the entry 1 must be an ascent, we can partition all non-1 entries into two sets $A,B$, and place all entries of $A$ before 1 in a decreasing sequence, and all entries of $B$ after 1 in an increasing sequence. There are $2^{n-1}$ ways to form $A,B$, and thus $\av(132,231) = 2^{n-1}$.
\end{proof}

We will also need several definitions for our proof. Let $\pi \in S_{n}$ be a permutation of length $n$. We say $\pi_{i}$ has \textit{rank} $j$ if the longest increasing subsequence ending at $\pi_{i}$ is of length $j$. We similarly say $\pi_{i}$ has \textit{rev-rank} $j$ if the longest decreasing subsequence starting at $\pi_{i}$ has length $j$. Note that $\pi_{i}$ having rev-rank $j$ is equivalent to $\pi^{rev}_{n+1-i}$ having rank $j$.

We are now ready to begin our proof.

\begin{proof}
    First, we show $L(\sigma_{k},\sigma_{k}^{rev}) \geq 2$. If $\pi$ avoids 132, it must necessarily avoid $\sigma_{k}$, since $\sigma_{k}$ contains a 132 in its final 3 entries. Similarly, if $\pi$ avoids 231 it must also avoid $\sigma_{k}^{rev}$. This means $\av(132,231) \subseteq \av(\sigma_{k},\sigma_{k}^{rev})$, and therefore $2=L(132,231) \leq L(\sigma_{k},\sigma_{k}^{rev})$.

    To show $L(\sigma_{k},\sigma_{k}^{rev}) \leq 2$, we will construct a set of permutations $P_{n}$ of length $n$ that contains $\av(\sigma_{k},\sigma_{k}^{rev})$ that has exponential growth rate 2. As a subset of this set, $\av(\sigma_{k},\sigma_{k}^{rev})$ must be smaller, and thus have growth rate at most 2.

    For every permutation $\pi$, we can define 3 partial permutations $A,B,C$. We define $A$ as all entries with rank and rev-rank at most $(k-2)$, $B$ as all entries with rank at least $(k-1)$, and $C$ will be all entries with rev-rank at least $(k-1)$.

    Note that for a permutation avoiding $\big(\sigma_{k},\sigma_{k}^{rev}\big)$, an entry cannot belong to both $B$ and $C$. If there was such an entry $\pi_{i}$, there would necessarily be $b < i < c$ such that $\pi_{b},\pi_{c} < \pi_{i}$ and $\pi_{b}$ (resp. $\pi_{c}$) is of rank (resp. rev-rank) at least $(k-2)$. Then if $\pi_{b} < \pi_{c}$ we would have a $\sigma_{k}$ pattern, and if $\pi_{b} > \pi_{c}$, we would have a $\sigma_{k}^{rev}$ pattern. Thus, $A,B,C$ partition all entries of $\pi$.

    By definition, $A$ must avoid $\big(12\dots (k-1), (k-1)\dots 21\big)$. If $\pi$ avoids $\big(\sigma_{k},\sigma_{k}^{rev}\big)$, then $B$ must avoid $(k-1)\dots 21$ and $132$, the former by the argument above. For the latter, assume $\pi_{i},\pi_{j},\pi_{k}$ form a 132 in $B$. Then by definition of $B$, there must be a length $(k-1)$ increasing subsequence ending in $\pi_{i}$. Then that increasing subsequence together with $\pi_{j},\pi_{k}$ must form a $\sigma_{k}$ pattern. A similar argument holds for $C$ and the patterns $\big(12\dots(k-1),231\big)$, reversing all patterns.

    Let $P_{n}$ be the set of permutations of length $n$ that can be partitioned into subsequences $A,B,C$ such that $A$ avoids $\big(12\dots (k-1),(k-1)\dots 21\big)$, $B$ avoids $\big(132, (k-1)\dots 21\big)$, and $C$ avoids $\big(231, 12\dots (k-1)\big)$. Every permutation that avoids $\big(\sigma_{k},\sigma_{k}^{rev}\big)$ will be in $P_{n}$. Note that these are not sufficient conditions to avoid $\big(\sigma_{k},\sigma_{k}^{rev}\big)$, but they are necessary. For example, the permutation $12\dots (k-1)k$ obviously contains $\sigma_{k}$, but $A = 12\dots (k-2)$, $B = (k-1)k$, $C = \emptyset$, fulfilling our conditions. Therefore, $P_{n}$ will be a larger set than $\av(\sigma_{k},\sigma_{k}^{rev})$.

    In addition, the way we will construct $P_{n}$ will not ensure $A,B,C$ have the rank/rev-ranks that we want. This will result in overcounting, since we are likely to count the same permutation multiple times. However, this is not an issue, since we are using $P_{n}$ as an upper bound.

    Let $\pi \in P_{n}$, $s$ be the size of $A$ in $\pi$. By the Erd\H{o}s-Szekeres theorem, we know $s \leq (k-2)^{2}$, and let $a_{s}$ be the number of permutations avoiding $12\dots (k-1),(k-1)\dots 21$ of length $s$. Then there are at most $a_{s}{n \choose s}^{2}$ ways to define $A$, where ${n \choose s}$ chooses the indices and values of $\pi$ that are part of $A$, and $a_{s}$ chooses the order. Since $s \leq (k-2)^{2}$, ${n \choose s}$ is a polynomial in $n$ of degree at most $(k-2)^{2}$, which is fixed for each $\sigma_{k}$. Therefore, we can write the number of options for $A$ as $a_{k}(n)$, which is a polynomial of degree at most $2(k-2)^{2}$, which means there is some constant $\alpha_{k}$ such that we can bound $a_{k}(n)$ by $\alpha_{k}n^{2(k-2)^{2}}$

    For $B,C$, we also need to choose values, but our choice of indices is forced once we choose the indices for $A$. If there is a $b \in B$, $c \in C$ where $b$ is to the left of $c$, then either $b < c$ and $b$ has rev-rank at least $k$, or $c < b$ and $c$ has rank at least $k$, which we already established as not allowed. Thus, in choosing indices and values for $B,C$, there are $2^{n-s}$ options, as we only need to choose a subset of values to belong to $B$.

    Let $t$ be the size of $B$, therefore $n-s-t$ is the size of $C$. By earlier discussion, $B$ needs to avoid $132,(k-1)\dots 21$, and $C$ needs to avoid $231,12\dots(k-1)$. By Lemma 2.1, the number of ways to order the values of $B$ (resp $C$) is a $p_{k-1}(t)$, a polynomial in $t$ (resp $n-s-t$) of degree at most $2k-4$. So for sufficiently large $n$, we can bound each value by $\beta_{k} n^{2k-4}$.

    Therefore, the total number of ways to construct $\pi \in P_{n}$ can be written as:
    \begin{align*}
        |P_{n}| 
        \leq \sum_{s=1}^{(k-2)^{2}} a_{s}{n \choose s}^{2} 2^{n-s} (\beta_{k} n^{2k-4})^{2}
        \leq \big(\alpha_{k} \beta_{k}^{2}\big)2^{n} n^{2(k-2)^{2}}n^{4k-8}
        \leq \big(\alpha_{k} \beta_{k}^{2}\big)2^{n}n^{2k^{2}-4k}.
    \end{align*}
    Thus our set $P_{n}$ has exponential growth rate 2. Since this set is an upper bound on $\av(\sigma_{k},\sigma_{k}^{rev})$, we can thus conclude that $L(\sigma_{k},\sigma_{k}^{rev}) = 2$.
\end{proof}

Theorem 1.2 follows directly from Theorem 1.1, through a simple upper and lower bound of $L(\sigma,\sigma^{rev})$ and $L(\tau,\tau^{rev})$.

\end{section}

\begin{section}{Sub-Exponential Differences}

While our proof shows that the exponential growth rates of $(\sigma_{k},\sigma_{k}^{rev})$ are equivalent for all $k$, it should be clear that the actual values are not equivalent. The natural follow-up question would then be if the \textit{sub-exponential} growth rates are also the same. The bounds on sub-exponential growth used in the proof are $n^{0}$ for $\av(132,231)$, and $n^{2k^{2}-4k}$ for $|P_{n}|$, but these are far from optimal.

While we are unable to prove exact sub-exponential asymptotics, we are able to show their generating functions are rational, and they have different sub-exponential growth rates.

\begin{theorem}
    Let $k \geq 3$, $\sigma_{k} = 12\dots (k-2)k(k-1)$. Then the generating function for $\av(\sigma_{k},\sigma_{k}^{rev})$ is rational.
\end{theorem}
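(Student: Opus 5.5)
We plan to prove rationality by exhibiting every $\pi \in \av(\sigma_{k},\sigma_{k}^{rev})$ as a bounded amount of ``exceptional'' data attached to a rigid skeleton of the kind that appears in Lemma 2.2. We reuse the decomposition from the proof of Theorem 1.1. Every $\pi$ splits into $A$, $B$, $C$, where $A$ has at most $(k-2)^{2}$ entries and all entries of $B$ lie to the left of all entries of $C$. (Equivalently, using the rank/rev-rank definitions, $C$ precedes $B$; the proof showed that a $B$-entry and a $C$-entry can occur in only one relative order.) Moreover, $B$ avoids $(132,(k-1)\dots 21)$ and $C$ avoids $(231,12\dots(k-1))$.

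The first step is structural. A permutation avoiding $132$ and $(k-1)\dots 21$ has a bounded number of ``layers'' in the $132$-avoiding (Dyck path) structure. Concretely, by Lemma 2.1 the count is polynomial, and one can show that such a permutation is determined by a bounded number of parameters: the lengths of at most $2k$ monotone runs, together with a bounded configuration. We would make this explicit by describing $132$-avoiders with no decreasing subsequence of length $k-1$ as an increasing sequence of at most $k-2$ ``staircase'' pieces, each of which is an interval of consecutive values arranged increasingly, glued in a fixed shape.

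The plan is then to encode $\pi$ as a word over a finite alphabet. Scanning the values of $\pi$ from $1$ to $n$, each value belongs to $A$, to one of the boundedly many increasing pieces of $B$, or to one of the decreasing pieces of $C$. We record for each value a letter naming its piece, and we insert finitely many marker letters for the positions and values of the $A$ entries. The order of values within each piece is forced, so the word determines $\pi$.

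The key claim is that the set of words arising from permutations that actually avoid $\sigma_{k}$ and $\sigma_{k}^{rev}$ is a regular language. Avoidance of the two patterns is a condition on how pieces interleave in value. Because each piece is monotone and the number of pieces and $A$-entries is bounded, the condition can be checked by a finite automaton that remembers only a bounded amount of information: which pieces have started, and how many entries of the current increasing run have been seen, capped at $k$. Since the encoding must be a bijection or at least finite-to-one in a controlled way, we would restrict to a canonical word, for instance the lexicographically least one, which keeps the language regular. Regular languages have rational generating functions, and the proof would conclude from that.

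The main obstacle is verifying that pattern containment is detectable with bounded memory. In particular, a $\sigma_{k}$ occurrence could use entries of $A$, $B$ and $C$ simultaneously, and the positional interleaving of $B$-pieces is not captured by value order alone. I would first try to show that pieces are intervals in \emph{both} position and value, reducing $\pi$ to an inflation of a bounded-size skeleton permutation by monotone intervals. Then $\av(\sigma_{k},\sigma_{k}^{rev})$ would be a finite union of classes of the form ``fixed skeleton with each point inflated by an increasing or decreasing run of length $\ge 1$, subject to finitely many length thresholds below $k$''. Each such class has a generating function that is a product of terms $x^{m}/(1-x)$, so the total is rational. Handling the double counting across skeletons would be done by inclusion--exclusion over the finitely many skeletons.
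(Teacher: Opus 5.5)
\paragraph{The step proposed for the main obstacle is false.} The $B$- and $C$-pieces are intervals of positions, but their \emph{values} interleave arbitrarily. That freedom is exactly the factor $2^{n-s}$ in the proof of Theorem 1.1, where one only chooses which values go to $B$. For $k=3$ we have $\sigma_3=132$ and $\sigma_3^{rev}=231$. In $642135$ one gets $C=\{6,4,2\}$, $A=\{1\}$, $B=\{3,5\}$, and the two value sets interleave.

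More decisively, a fixed skeleton of size $m$ inflated by monotone runs produces at most $\binom{n-1}{m-1}$ permutations of length $n$. So any finite union of such families has polynomially bounded coefficients, as does any finite combination of products of factors $x^{m}/(1-x)$. But $\av(\sigma_k,\sigma_k^{rev})\ge \av(132,231)=2^{n-1}$. The reduction to bounded-skeleton inflations, and the inclusion--exclusion built on it, would therefore contradict Theorem 1.1.

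\paragraph{What survives needs two proofs you only assert.} The value-scan word idea is viable; it is essentially the geometric grid class method of Albert, Atkinson, Bouvel, Ru\v{s}kuc and Vatter. Its two key points are asserted in your proposal, not proved.

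\emph{Regularity of avoidance.} Split the pieces at the positions of the $A$-entries. Then $\pi$ is given by a bounded layout of monotone blocks and single points in a fixed left-to-right order, together with the word recording, for $v=1,\dots,n$, which block contains $v$. The pattern formed by any $k$ entries depends only on their $k$ letters. Hence $\pi$ contains $\sigma_k$ or $\sigma_k^{rev}$ if and only if the word contains one of finitely many length-$k$ scattered subwords, which is a regular condition. The memory you describe (which pieces have started, plus one capped run length) is not this invariant.

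\emph{Uniqueness of the encoding.} Many words and layouts give the same permutation, since blocks can be split, merged or have their boundaries shifted. Taking the lexicographically least word is not automatically regular. You need an explicit canonical encoding, for instance the rank/rev-rank split into $A,B,C$ with maximal blocks, and then a check that canonicity is regular. This can be done through the same kind of prefix scattered-subword conditions.

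For comparison, the paper needs none of this. Deleting the maximum of $\sigma_k$ leaves $12\cdots(k-1)$, and deleting the maximum of $\sigma_k^{rev}$ leaves $(k-1)\cdots 21$. Vatter's theorem then gives a finitely labeled generating tree, hence a rational generating function.
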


\begin{proof}
    This Theorem directly follows from Theorem 7 of \cite{vatter}. There, Vatter proved that a generating tree for $\av(\sigma,\sigma^{rev})$ with finitely many labels must have a rational generating function, and characterized the sets of patterns that fulfill that condition. He proved that a set of patterns must contain the child of an increasing pattern and the child of a decreasing pattern, defined (respectively) as patterns where the removal of the largest value would give the monotone increasing (decreasing) pattern. Our patterns $\big(\sigma_{k},\sigma_{k}^{rev}\big)$ fulfill these conditions, and thus our generating functions are rational.
\end{proof}

This theorem gives us some additional information about the asymptotics of our classes, and helps prove the following theorem:

\begin{theorem}
    Let $k \geq 3$. Let $\sigma = 12\dots (k-2)k(k-1), \tau = 12\dots (k-1)(k+1)k$. Then there exists $S,T \in \mathbb{R}$, $s < t \in \mathbb{N}$ such that:
    \begin{align*}
        \av(\sigma,\sigma^{rev}) \sim S\cdot2^{n}\cdot n^{s}, \\
        \av(\tau,\tau^{rev}) \sim T \cdot 2^{n} \cdot n^{t}.
    \end{align*}
    In particular, the sub-exponential growth rates of the sets are different positive integer powers of $n$.
\end{theorem}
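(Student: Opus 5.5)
Here $\sigma=\sigma_k$ and $\tau=\sigma_{k+1}$. By Theorem 3.1 both generating functions are rational, so each coefficient sequence is, for large $n$, a finite sum of terms $P_\rho(n)\rho^{-n}$ over the poles $\rho$, with $P_\rho$ a polynomial. By Theorem 1.1 the exponential growth rate is $2$. So the plan is to show that $z=1/2$ is the only pole on the circle $|z|=1/2$ (or at least the dominant one), and then to compute or compare its order. If the pole at $1/2$ has order $m$, then $\av\sim S\cdot 2^n n^{m-1}$ with $S>0$. This gives the required form, with $s=m_\sigma-1$ and $t=m_\tau-1$. Since every count is at least $\av(132,231)=2^{n-1}$, the exponents are nonnegative integers.

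\textbf{Dominance.} I would read the poles off Vatter's finite-label generating tree. The transfer matrix is nonnegative, so Perron--Frobenius places a pole at the spectral radius, $z=1/2$. To rule out other poles on the same circle, I would argue aperiodicity: the tree contains a label with a self-loop. For instance, an increasing run appended at the end keeps the label fixed, and the $2^{n-1}$ subclass of Lemma 2.2 already supplies the needed loop. Failing that, I would argue directly that the counts are eventually monotone with ratio tending to $2$. This rules out the oscillation that other poles of modulus $1/2$ would cause.

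\textbf{Comparing the orders.} The main task is to show $s<t$. I would prove two things:
\[
\av(\sigma_k,\sigma_k^{rev})\le \av(\sigma_{k+1},\sigma_{k+1}^{rev}), \qquad \frac{\av(\sigma_{k+1},\sigma_{k+1}^{rev})}{\av(\sigma_k,\sigma_k^{rev})}\to\infty.
\]
Since both counts have the form $C\,2^n n^{\text{integer}}$, a ratio tending to infinity forces $t>s$.

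For the inequality, I would use the structure of the proof of Theorem 1.1: an avoider is a bounded set $A$ together with an interleaving of $B$ and $C$. For the divergence, I would build an explicit family inside $\av(\sigma_{k+1},\sigma_{k+1}^{rev})$ of size about $2^n n$ times the size of $\av(\sigma_k,\sigma_k^{rev})$. One concrete construction: take a $(\sigma_k,\sigma_k^{rev})$-avoider and insert an extra "free" entry, such as a new minimum placed in one of roughly $n$ positions. Because the patterns for $k+1$ are one longer, this insertion cannot create $\sigma_{k+1}$ or its reverse. Each insertion must then be shown to produce a distinct permutation, up to a bounded multiplicity. This divergence step is the hardest part of the plan. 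A cruder but safer alternative is to show that a single extra length-one level in the $A$-part contributes a factor of order $n$. Concretely, I would exhibit, for each $k$, a family of size $\Theta(2^n n^{c_k})$ with $c_{k+1}>c_k$, together with an upper bound $O(2^n n^{c_k})$ that refines the polynomial bound from Theorem 1.1.
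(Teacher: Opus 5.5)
Your decisive step is the paper's own. Insert a new minimum into $\pi\in\av(\sigma,\sigma^{rev})$ at any of the $n+1$ sites, observe that the result avoids $\tau,\tau^{rev}$, and conclude $\text{Av}_{n+1}(\tau,\tau^{rev})\ge (n+1)\av(\sigma,\sigma^{rev})$, hence $t\ge s+1$. Distinctness is not the hard part: deleting the entry $1$ recovers both $\pi$ and the site, so the map is injective and there is no multiplicity to control. (Also, ``$2^n n$ times'' should read ``$n$ times'', and the first inequality you list is not needed.)

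What does need an argument is avoidance, and the reason you give, that the patterns for $k+1$ are one longer, is not valid. Inserting a new \emph{maximum} into $12\cdots k\in\av(\sigma_k,\sigma_k^{rev})$ just before its last entry produces exactly $\sigma_{k+1}$. The minimum works for a different reason. The lowest entry of $\sigma_{k+1}$ is its first entry, and deleting it leaves $\sigma_k$. The lowest entry of $\sigma_{k+1}^{rev}$ is its last entry, and deleting it leaves $\sigma_k^{rev}$. So any occurrence in the new permutation, whether or not it uses the new $1$, yields an occurrence of $\sigma_k$ or $\sigma_k^{rev}$ in $\pi$ once its lowest entry is removed. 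That is the argument the paper gives, and your proof needs it.

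On the asymptotic form you go beyond the paper, which simply asserts that $S\cdot 2^{n}n^{s}$ follows from rationality. You are right that more is needed. Rationality, nonnegativity and growth rate $2$ only force a pole of maximal order at $z=1/2$, and further poles of that same order on $|z|=1/2$ would make the normalized counts oscillate. But your fix is still a sketch, for two reasons.
\begin{enumerate}
\item A self-loop at one label makes only its own strongly connected component of the transfer matrix aperiodic. You would need this for every contributing component of spectral radius $2$, or else show that the pole at $1/2$ has strictly larger order than any other pole on that circle.
\item Eventual monotonicity rules nothing out. For example, $2^{n}\bigl(10+\cos(n\pi/2)\bigr)$ is an increasing integer sequence with rational generating function $\frac{10}{1-2z}+\frac{1}{1+4z^{2}}$, yet it has no asymptotic of the form $S\cdot 2^{n}$.
\end{enumerate}
Proving $a_{n+1}/a_n\to 2$ would suffice, but that is essentially the claim itself.
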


\begin{proof}

First, $s,t \in \mathbb{N}$ follows directly from the generating functions for $\av(\sigma,\sigma^{rev})$ and $\av(\tau,\tau^{rev})$ being rational. To prove $s \neq t$, we will again use a bounding argument.

Assume $\pi \in \av(\sigma,\sigma^{rev})$. Observe that inserting the value of 1 anywhere in $\pi$, and increasing all entries of $\pi$ by 1, will result in a new permutation of $n+1$, and this resultant permutation will always avoid $\tau,\tau^{rev}$. If it did not, say it contained some $\tau$ pattern, then either that pattern uses the value of 1, or it doesn't. If it did, then the non-1 values of the $\tau$ pattern would form a $\sigma$ pattern in $\pi$. If it did not, then ignoring the smallest value of that $\tau$ pattern would also give a $\sigma$ pattern in $\pi$. A similar argument follows if the new permutation contains a $\tau^{rev}$.

There are $n+1$ options for where to insert a 1 in $\pi$, and all $\pi \in \av(\sigma,\sigma^{rev})$ and all choices of where to insert 1 give a different permutation. Therefore, $\text{Av}_{n+1}(\tau,\tau^{rev}) \geq (n+1)\av(\sigma,\sigma^{rev})$. Therefore $t \geq s+1$ and thus $t \neq s$.

\end{proof}

\end{section}

\begin{section}{Conclusion and Further Directions}

Let $I_{k} = 12\dots k, D_{k} = k\dots 21$. For two patterns $\sigma \in S_{k},\tau \in S_{m}$, we define the \textit{direct sum} $\oplus$ and \textit{skew sum} $\ominus$ as follows:
\begin{align*}
    (\sigma \oplus \tau)_{i} &=
        \begin{cases}
            \sigma_{i} & i \leq k, \\
            k+\tau_{k-i} & i > k,
        \end{cases}, \\
    (\sigma \ominus \tau)_{i} &= 
        \begin{cases}
            m+\sigma_{i} & i \leq k, \\
            \tau_{k-i} & i > k.
        \end{cases}
\end{align*}
In this language, we've shown that for $\sigma_{k} = I_{k-2} \oplus 21$, $L(\sigma,\sigma^{rev}) = 2$ for all $k \geq 3$.

In this paper we introduced a class of permutations whose exponential growth rate is independent of the patterns being avoided, similar to the monotone increasing and decreasing permutations of the Erd\H{o}s-Szekeres theorem. It is a natural question then, to ask when avoiding a pattern and its reverse gives such a result. In several cases we can prove an upper bound independent of pattern length, pointing towards an answer to this question:

\begin{enumerate}
    \item For $k \geq 4$, let $\pi_{k} = 21 \oplus I_{k-4} \oplus 21$. It can be proven using similar techniques to Theorem 1.1 that $4 \leq L(\pi_{k},\pi_{k}^{rev}) \leq 8$. Some numerical evidence suggests the true value is 8 for all $k$, but this is as yet unknown.
    \item For any pattern $\pi$, let $\pi'_{k} = I_{k} \oplus \pi$. It can be shown with naive techniques that $L(\pi'_{k},(\pi'_{k})^{rev}) \leq 4L(1\oplus \pi)$, providing an absolute bound on the growth rate independent of the size of $k$.
    \item As a generalization of both other points, for any pattern $\sigma,\tau$, let $\pi_{k} = \sigma \oplus I_{k} \oplus \tau$. Using methods similar to Theorem 4.2 in \cite{bona}, the value of $L(\pi_{k},\pi_{k}^{rev})$ can be bounded with an expression using only $L(1\oplus \tau),L(\sigma \oplus 1)$. However, the bounds obtained in this manner are often much larger than expected values, and serve primarily as an absolute bound for arbitrary $k$ rather than a plausible estimate.
\end{enumerate}

\end{section}

\section*{Declaration of AI Usage}

No AI was used at any point in this research, for either formulation or presentation of results.

\end{document}